\newcommand{\N}{\mathbb{N}}
\newcommand{\R}{\mathbb{R}}
\newtheorem{theorem}{Theorem}
\newtheorem{proposition}[theorem]{Proposition}
\newtheorem{definition}[theorem]{Definition}
\newtheorem{remark}{Remark}
\def\di{\displaystyle}
\def\N{\mathbb{N}}
\def\R{\mathbb{R}}
\newcommand{\fonction}[5]{\begin{array}[t]{lrcl}#1 :&#2 &\longrightarrow &#3\\&#4& \longmapsto &#5 \end{array}}
\newcommand{\fonctionsansdef}[3]{#1 : #2 \longrightarrow #3}
\begin{document}

\title{Helmholtz theorem for stochastic Hamiltonian systems}
\author{Fr\'ed\'eric Pierret}
\address{SYRTE, Observatoire de Paris, 77 avenue Denfert-Rochereau, 75014 Paris, France}
\subjclass[2010]{60H10,37K05,65P10}
\keywords{stochastic differential equations, stochastic calculus of variations, stochastic Hamilton's equations, stochastic inverse problem of calculus of variations}
\begin{abstract}
We derive the Helmholtz theorem for stochastic Hamiltonian systems. Precisely, we give a theorem characterizing Stratonovich stochastic differential equations, admitting a Hamiltonian formulation. Moreover, in the affirmative case, we give the associated Hamiltonian. This result show coherence with the approach of \cite{milstein} where the authors answered to this problem through the preservation of the symplectic form but without providing the Hamiltonian associated.
\end{abstract}

\maketitle
\setcounter{tocdepth}{2}
\tableofcontents

\section{Introduction}

A classical problem in Analysis is the well-known {\it Helmholtz's inverse problem of the calculus of variations}: find a necessary and sufficient condition under which a (system of) differential equation(s) can be written as an Euler--Lagrange or a Hamiltonian equation and, in the affirmative case, find all the possible Lagrangian or Hamiltonian formulations. This condition is usually called \textit{Helmholtz condition}.\\

The Lagrangian Helmholtz problem has been studied and solved by J. Douglas \cite{doug}, A. Mayer \cite{maye} and A. Hirsch \cite{hirs,hirs2}. The Hamiltonian Helmholtz problem has been studied and solved up to our knowledge by R. Santilli in his book \cite{santilli}.\\

Generalization of this problem in the \emph{discrete calculus} of variations framework has been done in \cite{bourdin-cresson} and \cite{hydon1} in the discrete Lagrangian case. For the Hamiltonian case it has been done for the discrete calculus of variations in \cite{opri1} using the framework of \cite{mars} and in \cite{cresson-pierret2} using a discrete embedding procedure derived in \cite{cresson-pierret1}. In the case of \emph{time-scale calculus}, i.e. a mixing between continuous and discrete sub-intervals of time, it has been done in \cite{pierret_helmholtz_ts}.\\

In this paper we generalize the Helmholtz theorem for Hamiltonian systems in the case of \emph{Stratonovich stochastic calculus}. We recover the classical case when there is no stochastic counterpart.\\

The paper is organized as follows: In section 2, we give some generalities and notations about the Stratonovich stochastic calculus. In section 3, we remind definitions and results about classical and stochastic Hamiltonian systems. In section 4, we give a brief survey of the classical Helmholtz Hamiltonian problem and then we prove the main result of this paper, the stochastic Hamiltonian Helmholtz theorem. Finally, in section 5, we conclude and give some prospects.

\section{Generalities and Notations}
In this section we remind the basic definitions and notations of stochastic processes following closely the presentation done in the book of B. {\O}ksendal \cite{oksendal} for which we refer for more details and proofs. \\

In all the paper, we consider $\left(\Omega, \mathcal{F},P\right)$ to be a probability space. \\

\begin{definition}
Let $B(t)$ be a $n$-dimensional Brownian motion. Then we define $\mathcal{F}_t=\mathcal{F}^{(n)}_t$ to be the $\sigma$-algebra generated by the random variable $\left\{B_i(s)\right\}_{1\leq i\leq n, 0\leq s\leq t}$.
\end{definition}

\begin{definition}[Definition 3.3.2 p. 35, \cite{oksendal}]
Let $\mathcal{W}_\mathcal{H}(a,b)$ be the class of functions
	\begin{equation*}
	\fonction{f}{[0,+\infty)\times\Omega}{\R}{(t,\omega)}{f(t,\omega)=f_t(\omega)}
	\end{equation*}
	such that
	\begin{enumerate}
		\item $(t,\omega)\rightarrow f(t,\omega)$ is $\mathcal{B}\times\mathcal{F}$-measurable, where $\mathcal{B}$ denotes the Borel $\sigma$-algebra on $[0,+\infty)$.
		\item There exist a filtration $\mathcal{H}$ on $\left(\Omega, \mathcal{F}\right) $ defining an increasing family of $\sigma$-algebras $\mathcal{H}_{t}, t\geq 0$ such that 
		\begin{enumerate}
			\item  $B_{t}$ is a martingale with respect to $\mathcal{H}_{t}$
			
			\item $f_{t}$ is $\mathcal{H}_{t}$-adapted
		\end{enumerate}
		\item $P\left(\displaystyle \int_{a}^{b}f(s,\omega)^{2}ds<\infty\right)=1$.
	\end{enumerate}

We put $ \di \mathcal{W}_\mathcal{H}=\bigcap_{b>0}\mathcal{W}_\mathcal{H}(0,b)$.
\end{definition}

\begin{definition}
Let $\mathcal{W}^{d\times n}_\mathcal{H}(a,b)$ to be the set of $d\times n$ matrices $v=\left[v_{ij}(t,\omega)\right]$ where each entry $v_{ij}(t,\omega)\in\mathcal{W}_\mathcal{H}(a,b)$. We also put in the same manner as the previous definition, $ \di \mathcal{W}^{d\times n}_\mathcal{H}=\bigcap_{b>0}\mathcal{W}^{d\times n}_\mathcal{H}(0,b)$.
\end{definition}

We now can introduce the set of It\^o processes (see Definition 4.1.1 p. 44 in \cite{oksendal})

\begin{definition}[$1$-dimensional It\^{o} processes]
Let $B(t)$ be a $1$-dimensional Brownian motion on $(\Omega, \mathcal{F}, P)$. A 1-dimensional
\emph{It\^{o} process} or \emph{It\^o integral} is a stochastic process $X(t)$ on $(\Omega, \mathcal{F}, P)$ of the form
\begin{equation}
X(t)=X(0)+\int_{0}^{t}u(s, \omega)ds+\int_{0}^{t}v(s, \omega)dB(s),
\end{equation}
where $v\in \mathcal{W}_{\mathcal{H}}$, $u$ is $\mathcal{H}_{t}$-adapted and is such that
\begin{equation*}
P\left(\displaystyle \int_{0}^{t}|u(s,\omega)|ds<\infty \ \text{for all }t\geq 0\right)=1.
\end{equation*}
In the differential form, $X_t$ is written as
\begin{equation}
dX(t)=uds+vdB_t
\end{equation}
\end{definition}

We now turn in higher dimensions (see \cite{oksendal}, Section 4.2 p. 48)

\begin{definition}[$d$-dimensional It\^{o} processes]
Let $B(t)$ be a $n$-dimensional Brownian motion on $(\Omega, \mathcal{F}, P)$. A $d$-dimensional
It\^{o} process is a stochastic process $X(t)$ on $(\Omega, \mathcal{F}, P)$ of the form
\begin{equation}
X(t)=X(0)+\int_{0}^{t}u(s, \omega)ds+\int_{0}^{t}v(s, \omega) dB(s),
\end{equation}
where $u$ is a $d$-dimensional vector and $v$ is a $d\times n$ matrix such that each entry $u_i(t,\omega)$ and $v_{i,j}((t,\omega))$ defined a 1-dimensional It\^o process, for $i=1,\ldots,d$ as in the previous definition. Generally, when $u(t,\omega)=\mu(t,X(t)$ and $v(t,\omega)=\sigma(t,X(t))$ where $\mu :\R^{d+1} \rightarrow \R^d$, $\sigma :\R^{d+1} \rightarrow \R^{d\times n}$,
\begin{equation}
dX(t)=\mu(t,X(t))dt+\sigma(t,X(t))dB(t)
\end{equation}
is called an \emph{It\^o stochastic differential equation}
\end{definition}

We now define the \emph{Stratonovich integral}  from the \emph{It\^o integral} (see \cite{oksendal}, Paragraph ``A comparison of It\^o and Stratonovich integrals" p. 35 and Section 6.1 p. 85)

\begin{definition}[Stratonovich integral]
\label{definition_strato}
Let $B(t)$ be n-dimensional Brownian motion on $(\Omega, \mathcal{F}, P)$ and let $X(t)$ a $d$-dimensional It\^{o} process such that 
\begin{equation}
X(t)=X(0)+\int_{0}^{t}\left\{\mu(s, X(s))+\tilde{\mu}(s, X(s))\right\}ds+\int_{0}^{t}\sigma(s, X(s)) dB(s),
\end{equation}
where $\mu,\tilde{\mu} :\R^{d+1} \rightarrow \R^d$, $\sigma :\R^{d+1} \rightarrow \R^{d\times n}$, and where for all $1\leq i\leq d$ we have
\begin{equation*}
\tilde{\mu}_{i}(t, x)=\frac{1}{2}\sum_{j=1}^{n}\sum_{k=1}^{d}\frac{\partial \sigma_{ij}}{\partial x_{k}}\sigma_{kj}.
\end{equation*}
The term
\begin{equation*}
\int_{0}^{t}\tilde{\mu}(s, X(s))ds+\int_{0}^{t}\sigma(s, X(s)) dB(s)
\end{equation*}
defines the \emph{Stratonovich integral} (see \cite{oksendal},p.24,2) and it is written as
\begin{equation}
\int_0^t \sigma (s, X(s) ) \circ d B(s) = \int_{0}^{t}\tilde{\mu}(s, X(s))ds+\int_{0}^{t}\sigma(s, X(s)) dB(s).
\end{equation}
Then, the corresponding \emph{Stratonovich process} is written as
\begin{equation}
X(t)=X(0)+\int_{0}^{t}\mu(s, X(s))ds+\int_{0}^{t}\sigma(s, X(s)) \circ dB(s),
\end{equation}
or in differential form as
\begin{equation}
dX(t)=\mu(s, X(s))ds+\sigma(s, X(s)) \circ dB(s).
\end{equation}
which is called a \emph{Stratonovich stochastic differential equation}
\end{definition}

We define the set $\mathcal{S}([a,b],\R^d)$ as the set of $d$-dimensional Stratonovich processes defined in the Definition \ref{definition_strato} over the interval of time $[a,b]$. We formally introduce the set $\mathcal{S}^*([a,b],\R^d)$ as the set of Stratonovich stochastic process in their differential form, i.e. a element $x \in \mathcal{S}^*([a,b],\R^d)$ correspond to a element $X(t)\in \mathcal{S}([a,b],\R^d)$ such that $x=dX(t)$. We also define $\mathcal{S}_0([a,b],\R^d)=\{X \in \mathcal{S}([a,b],\R^d), \ X(a,\omega)=X(b,\omega)=0 \ \text{for all} \ \omega\}$. \\

From Theorem 4.1.5 in \cite{oksendal}, we have the following integration by parts formula for Stratonovich processes
\begin{theorem}[Stratonovich integration by parts formula]
Let $X,Y\in \mathcal{S}([a,b],\R^d)$. We have
\begin{equation}
\int_{a}^{b}X(t) \circ  dY(t)=X(b)\cdot Y(b)-X(a)\cdot Y(a) - \int_{a}^{b}Y(t)\circ dY(t).
\end{equation}
\end{theorem}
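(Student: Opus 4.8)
The plan is to exploit the defining feature of the Stratonovich integral: it obeys the ordinary Leibniz rule of classical calculus, in contrast to the It\^o integral, which carries an extra term built from the quadratic covariation. Concretely, I would first reduce the statement to the It\^o setting using the relation between the two integrals encoded in Definition \ref{definition_strato}. For scalar It\^o processes $X_i$ and $Y_i$, the correction drift $\tilde\mu$ in that definition is exactly what converts an It\^o integral into a Stratonovich one, so that
\begin{equation*}
\int_a^b X_i(t)\circ dY_i(t)=\int_a^b X_i(t)\, dY_i(t)+\frac{1}{2}\langle X_i,Y_i\rangle_a^b,
\end{equation*}
where $\langle\cdot,\cdot\rangle$ denotes the quadratic covariation. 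Summing over the $d$ components expresses $\int_a^b X\circ dY$ as the corresponding It\^o integral plus $\tfrac12\langle X,Y\rangle_a^b$.

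Next I would invoke the It\^o product rule, which is the content of {\O}ksendal's Theorem 4.1.5 and which for two It\^o processes reads
\begin{equation*}
X(b)\cdot Y(b)-X(a)\cdot Y(a)=\int_a^b X(t)\, dY(t)+\int_a^b Y(t)\, dX(t)+\langle X,Y\rangle_a^b.
\end{equation*}
Writing both Stratonovich integrals $\int_a^b X\circ dY$ and $\int_a^b Y\circ dX$ in their It\^o form, each one contributes one half of $\langle X,Y\rangle_a^b$, so their sum becomes
\begin{equation*}
\int_a^b X(t)\circ dY(t)+\int_a^b Y(t)\circ dX(t)=\int_a^b X(t)\, dY(t)+\int_a^b Y(t)\, dX(t)+\langle X,Y\rangle_a^b.
\end{equation*}
Substituting the It\^o product rule into the right-hand side collapses it to $X(b)\cdot Y(b)-X(a)\cdot Y(a)$, and rearranging yields exactly the asserted identity.

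The main obstacle I anticipate is bookkeeping rather than anything deep: one must verify that the covariation correction contributed by each Stratonovich integral is precisely half of the full covariation appearing in the It\^o product rule, so that the two halves combine to cancel it and leave the clean, classical-looking formula. This amounts to checking that $\langle X,Y\rangle$ is symmetric and bilinear and depends only on the martingale (Brownian) parts $\sigma$ of $X$ and $Y$, the drift terms $\mu,\tilde\mu$ being of bounded variation and hence contributing nothing to the covariation. Once this symmetry $\langle X,Y\rangle=\langle Y,X\rangle$ and the vanishing of the bounded-variation contributions are established, the cancellation is automatic, and the vector (dot-product) version of the statement follows by summing the scalar identity over the $d$ coordinates.
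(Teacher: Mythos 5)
Your proof is correct, and the first thing to note is that the paper itself gives no proof of this theorem: it is stated as a direct import of Theorem 4.1.5 of {\O}ksendal's book. Your argument --- writing each Stratonovich integral as its It\^o counterpart plus half the quadratic covariation, invoking the It\^o product rule, and using symmetry of the covariation together with the fact that the bounded-variation (drift) parts contribute nothing --- is the standard derivation and is exactly the content hidden behind that citation, so it legitimately fills the gap rather than diverging from a proof the paper actually wrote. The heart of the matter is precisely your cancellation step: the two half-covariations contributed by $\int_a^b X\circ dY$ and $\int_a^b Y\circ dX$ reassemble the full covariation $\langle X,Y\rangle_a^b$ appearing in the It\^o product rule, and the componentwise summation over $i=1,\dots,d$ then gives the dot-product statement. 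One point you should flag explicitly: the identity as printed in the paper ends with $-\int_a^b Y(t)\circ dY(t)$, which is a misprint; the formula your argument proves (and the only correct one) ends with $-\int_a^b Y(t)\circ dX(t)$. You silently corrected this, which is the right thing to do, but the discrepancy with the stated equation deserves a remark.
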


We now introduce the definition of \emph{stochastic field}

\begin{definition}
A \emph{stochastic field} $X(Z(t))$ associated to a stochastic process $Z(t)\in\mathcal{S}([a,b],\R^d)$ is the data of its deterministic part and its purely stochastic part as follows: $X(Z(t))=\{\mu(t,Z(t)),\sigma(t,Z(t))\}$.
\end{definition}

In what follow, we omit for notations convenience, when there is no misleading, the explicit dependence in $\omega$ and we put the dependency in time $t$ as a subscript when there is no possible confusion with the coordinates.

\section{Reminder about Hamiltonian systems}

\subsection{Classical Hamiltonian systems}
For simplicity we consider time-independent Hamiltonian. The time-dependent case can be done in the same way.

\begin{definition}[Classical Hamiltonian]
A classical Hamiltonian is a function $H : \R^d \times \R^d \rightarrow \R$ such that for $(q,p)\in C^1([a,b],\R^d) \times C^1([a,b],\R^d)$ we have the time evolution of $(q,p)$ given by the classical Hamilton's equations
\begin{equation}
\left\{
\begin{array}{l l}
\dot{q}&=\frac{\partial H(q,p)}{\partial p} \\
\dot{p}&=-\frac{\partial H(q,p)}{\partial q}
\end{array}
\right.
\label{def_hamilton}
\end{equation}
\end{definition}

A vectorial notation is obtained posing $z=(q,p)^\mathsf{T}$ and $\nabla H = (\frac{\partial H}{\partial q} , \frac{\partial H}{\partial p} )^\mathsf{T}$ where $\mathsf{T}$ denotes the transposition. The Hamilton's equations are then written as
\begin{equation}
\frac{dz}{dt} = J \cdot \nabla H,
\end{equation}
where $J = \begin{pmatrix} 0 & I_d \\ -I_d & 0 \end{pmatrix}$ with $I_d$ the identity matrix on $\R^d$ denotes the symplectic matrix.\\


An important property of Hamiltonian systems is that there solutions correspond to \emph{critical points} of a given functional, i.e. follow from a \emph{variational principle}.

\begin{theorem}
The points $(q,p)\in C^1([a,b],\R^d) \times C^1([a,b],\R^d)$ satisfying Hamilton's equations are critical points of the functional
\begin{equation}
\footnotesize\fonction{\mathcal{L}_H}{C^1([a,b],\R^d)\times C^1([a,b],\R^d)}{\R}{(q,p)}{\mathcal{L}_H(q,p) = \di\int_{a}^{b} \di L_H (q(t),p(t),\dot{q}(t),\dot{p}(t))}
\end{equation}
where $\fonctionsansdef{L_H}{\R^d \times \R^d \times \R^d \times \R^d}{\R}$ is the Lagrangian defined by
\begin{equation*}
\di L_H(x,y,v,w)=y\cdot v-H(x,y).
\end{equation*}
\label{thm_hamiltonian}
\end{theorem}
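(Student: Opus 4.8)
The plan is to compute the first variation of the functional $\mathcal{L}_H$ and show that its vanishing is equivalent to Hamilton's equations \eqref{def_hamilton}. First I would fix a pair $(q,p)$ and take arbitrary variations $(\delta q, \delta p) \in C^1([a,b],\R^d)\times C^1([a,b],\R^d)$ with fixed endpoints, i.e. $\delta q(a)=\delta q(b)=0$ and $\delta p(a)=\delta p(b)=0$. I would then form $\mathcal{L}_H(q+\epsilon\,\delta q, p+\epsilon\,\delta p)$ and differentiate with respect to $\epsilon$ at $\epsilon=0$, writing the integrand $L_H(x,y,v,w)=y\cdot v - H(x,y)$ with the identifications $x=q$, $y=p$, $v=\dot q$, $w=\dot p$. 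Since $L_H$ does not depend on $w=\dot p$, the term $\partial L_H/\partial w$ vanishes, which is exactly what makes the computation produce Hamilton's equations rather than a pair of second-order equations.

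Carrying this out, the first variation is
\begin{equation*}
\frac{d}{d\epsilon}\Big|_{\epsilon=0}\mathcal{L}_H(q+\epsilon\,\delta q, p+\epsilon\,\delta p)
= \int_a^b \left( -\frac{\partial H}{\partial q}\cdot \delta q + \dot q \cdot \delta p - \frac{\partial H}{\partial p}\cdot \delta p + p\cdot \delta\dot q \right)\,dt.
\end{equation*}
The only term containing a derivative of a variation is $p\cdot\delta\dot q$, so the next step is a single integration by parts on that term, using the boundary conditions $\delta q(a)=\delta q(b)=0$ to kill the boundary contribution $[p\cdot\delta q]_a^b$. This replaces $p\cdot\delta\dot q$ by $-\dot p\cdot\delta q$ and yields
\begin{equation*}
\frac{d}{d\epsilon}\Big|_{\epsilon=0}\mathcal{L}_H
= \int_a^b \left( \Big(-\frac{\partial H}{\partial q}-\dot p\Big)\cdot \delta q + \Big(\dot q - \frac{\partial H}{\partial p}\Big)\cdot \delta p \right)\,dt.
\end{equation*}

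The final step is to invoke the fundamental lemma of the calculus of variations: since this integral vanishes for all admissible $\delta q$ and $\delta p$ independently, and the integrand is continuous, the two bracketed coefficients must vanish identically on $[a,b]$. Setting them to zero gives precisely $\dot q = \partial H/\partial p$ and $\dot p = -\partial H/\partial q$, which are Hamilton's equations. Conversely, if $(q,p)$ satisfies Hamilton's equations, both coefficients vanish and the first variation is identically zero, so $(q,p)$ is a critical point; this establishes the equivalence in both directions.

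I do not anticipate a genuine obstacle here, as this is the classical derivation of Hamilton's equations from the phase-space (Hamiltonian) action principle. The only point demanding care is the asymmetry between the $q$ and $p$ variations: because $L_H$ is independent of $\dot p$, one integrates by parts only in the $\delta q$ term, and the $\delta p$ term already appears without any derivative. Keeping the vectorial dot products and the independence of the two families of variations straight is the one place where a routine slip could occur, but no deep difficulty arises.
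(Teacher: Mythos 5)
Your proof is correct: it is the standard first-variation argument (expand $\mathcal{L}_H$ along $(q+\epsilon\,\delta q,\,p+\epsilon\,\delta p)$, integrate by parts only on the $p\cdot\delta\dot q$ term using fixed-endpoint variations, and apply the fundamental lemma of the calculus of variations), and the computation and both directions of the equivalence are carried out without error. Note that the paper itself states this theorem as a classical reminder and offers no proof of it, so your argument is exactly the expected one; it is also the deterministic template for the stochastic computation the paper does carry out in Proposition \ref{main2}, where the Stratonovich integration by parts plays the role of your boundary-term step.
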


\subsection{Stochastic Hamiltonian systems}
\label{section-hamiltonian}
Stochastic Hamiltonian systems are formally defined as

\begin{definition}
A stochastic differential equation is called stochastic Hamiltonian system if we can find $\mathbf{H}=\left \{ H_D, H_S \right \}$ with $H_D :\R^{2d} \mapsto \R$ and $H_S :\R^{2d} \mapsto \R^{n}$
such that
\begin{equation}
\left\{
\begin{array}{lll}
dQ & = & \di\frac{\partial H_D}{\partial P} dt + \di\frac{\partial H_S}{\partial P} \circ d B_t \\
dP & = & -\di\frac{\partial H_D}{\partial Q} dt - \di\frac{\partial H_S}{\partial Q} \circ d B_t.
\end{array}
\right.
\end{equation}
\end{definition}

We recover the classical algebraic structure of Hamiltonian systems. The main properties supporting this definition are the following one, already proved in \cite{bismut}\\

\begin{itemize}
\item {\it Liouville's property} Let $(Q,P)\in \R^{2d}$, we consider the stochastic differential equation
\begin{equation}
\label{eq**}
\left\{
\begin{array}{lll}
dQ & = & f(P,Q)dt + \sigma (P,Q) \circ d B_t ,\\
dP & = & g(P,Q)dt + \gamma (P,Q) \circ d B_t .
\end{array}
\right.
\end{equation}
The phase flow of (\ref{eq**}) preserves the {\it symplectic structure} if and only if it is a stochastic Hamiltonian system.	\\
\item {\it Hamilton's principle} Solutions of a stochastic Hamiltonian system correspond to critical points of a stochastic functional defined by 
\begin{equation}
\mathcal{L}_{\mathbf{H}} (Q,P)=\di \int_a^b L_{H_D}(Q(t),P(t) dt + L_{H_S}(Q(t),P(t)\circ d B_t .
\end{equation}
\end{itemize}

\section{Stochastic Helmholtz problem}

In this Section, we solve the inverse problem of the stochastic calculus of variations in the Hamiltonian case. We first recall the usual way to derive the Helmholtz conditions following the presentation made by R. Santilli \cite{santilli}. We have two main derivations 
\begin{itemize}
\item One is related to the characterization of Hamiltonian systems via the \emph{symplectic two-differential form} and the fact that by duality the associated one-differential form to a Hamiltonian vector field is closed. Such conditions are called \emph{integrability conditions}.

\item The second one use the characterization of Hamiltonian systems via the self-adjointness of the Fr\'echet derivative associated to the differential operator associated to the equation. These conditions are usually called \emph{Helmholtz conditions}.
\end{itemize}
Of course, we have coincidence of the two procedures in the classical case. In the stochastic case the first characterization has been studied by \cite{milstein}. As a consequence, we follow the second way to obtain the stochastic analogue of the Helmholtz conditions and we show that there is coherence between the two approach.

\subsection{Hemlholtz conditions for Hamiltonian systems}

\subsubsection{Symplectic scalar product}

In this Section we work on $\R^{2d},d \ge 1, d \in \N$. The \emph{symplectic scalar product} $\langle \cdot,\cdot \rangle_J$ is defined for all $X,Y \in \R^{2d}$ by
\begin{equation}
\langle X,Y\rangle_J = \langle X,JY\rangle ,
\end{equation}
where $\langle \cdot,\cdot \rangle$ denotes the usual scalar product. We also consider the $L^2$ symplectic scalar product induced by $\langle \cdot,\cdot \rangle_J$ defined for $f,g \in C^1([a,b],\R^{2d})$ by 
\begin{equation}
\langle f,g \rangle_{L^2,J}=\di\int_{a}^{b} \langle f(t),g(t) \rangle_Jdt \ .
\end{equation}

\subsubsection{Adjoin of a differential operator}

In the following, we consider first order differential equations of the form
\begin{equation}
\label{equagen}
\frac{d}{dt}\begin{pmatrix}q \\ p \end{pmatrix} = \begin{pmatrix} X_q(q,p) \\ X_p(q,p) \end{pmatrix}.
\end{equation}
The associated differential operator is written as
\begin{equation}
O^{a,b}_X(q,p) = \begin{pmatrix} \dot{q} - X_q(q,p) \\ \dot{p} - X_p(q,p) \end{pmatrix} \ . 
\label{operatorO}
\end{equation}

A \emph{natural} notion of adjoin for a differential operator is then defined.

\begin{definition}
Let $\fonctionsansdef{A}{C^1([a,b],\R^{2n})}{C^1([a,b],\R^{2n})}$. We define the adjoin $A^*_J$ of $A$ with respect to $<\cdot,\cdot>_{L^2,J}$ by
\begin{equation}
<A \cdot f, g>_{L^2,J} = <A^{*}_J \cdot g, f>_{L^2,J} \ .
\end{equation}
\end{definition}

An operator $A$ will be called \emph{self-adjoin} if $A=A^{*}_J$ with respect to the $L^2$ symplectic scalar product.

\subsubsection{Hamiltonian Helmholtz conditions}

The Helmholtz's conditions in the Hamiltonian case are given by (see Theorem. 3.12.1, p.176-177 in \cite{santilli})

\begin{theorem}[Hamiltonian Helmholtz theorem]
Let $X(q,p)$ be a vector field defined by $X(q,p)^\mathsf{T}= (X_q(q,p) , X_p(q,p) )$. The differential equation (\ref{equagen}) is Hamiltonian if and only the associated differential operator $O^{a,b}_X$ given by (\ref{operatorO}) has a  self adjoin Fr\'echet derivative with respect to the symplectic scalar product. 

In this case the Hamiltonian is given by 
\begin{equation}
H(q,p)=\int_{0}^{1}\left[p \cdot X_q(\lambda q, \lambda p) - q\cdot X_p(\lambda q, \lambda p) \right]d\lambda
\end{equation}
\end{theorem}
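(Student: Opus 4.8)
The plan is to prove the classical Hamiltonian Helmholtz theorem by establishing the equivalence of three conditions: (i) the equation (\ref{equagen}) is Hamiltonian; (ii) the Fr\'echet derivative of $O^{a,b}_X$ is self-adjoint with respect to $\langle\cdot,\cdot\rangle_{L^2,J}$; and (iii) an explicit symmetry of the Jacobian of the vector field $X$. First I would compute the Fr\'echet derivative of the operator $O^{a,b}_X$ at a point $(q,p)$. Writing $z=(q,p)^\mathsf{T}$, the operator is $O_X(z)=\dot z - X(z)$, so its Fr\'echet derivative in the direction $h\in C^1([a,b],\R^{2d})$ is the linear operator $DO_X(z)\cdot h = \dot h - DX(z)\cdot h$, where $DX(z)$ is the Jacobian matrix of the vector field. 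This is a routine differentiation and should be dispatched quickly.

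Next I would compute the $J$-adjoint of this linear operator using the definition $\langle A f,g\rangle_{L^2,J}=\langle A^*_J g,f\rangle_{L^2,J}$. The key tool is integration by parts: for the differentiation part $\dot h$, I would integrate by parts in $\int_a^b \langle \dot h, Jg\rangle\,dt$, which transfers the derivative onto $g$ and contributes a sign. To keep the adjoint a genuine operator on $C^1([a,b],\R^{2d})$ one works with variations vanishing at the endpoints (the space $\mathcal{S}_0$-analogue), so the boundary terms drop out. For the algebraic part $DX(z)\cdot h$, I would use that $\langle M h, g\rangle_J = \langle h, J^{-1}M^\mathsf{T} J g\rangle_J$ for a matrix $M$, exploiting $J^\mathsf{T}=-J=J^{-1}$. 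Comparing $DO_X$ with $(DO_X)^*_J$ term by term, the self-adjointness condition reduces to the algebraic identity that $J\cdot DX(z)$ is a symmetric matrix, equivalently $DX(z)^\mathsf{T}J = J\, DX(z)$, for all $(q,p)$.

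Then I would show that this matrix symmetry condition is exactly the \emph{closedness} condition (the integrability condition) that guarantees the one-form $\omega = \langle JX(z),dz\rangle$ is exact, hence $X=J\nabla H$ for some scalar function $H$, which is precisely the statement that (\ref{equagen}) is Hamiltonian. The passage from a closed one-form to a potential is the Poincar\'e lemma on $\R^{2d}$, and the explicit antiderivative is given by integration along the radial segment $\lambda\mapsto \lambda z$. Carrying out that line integral, $H(q,p)=\int_0^1 \langle J X(\lambda q,\lambda p), (q,p)^\mathsf{T}\rangle\,d\lambda$, and expanding the symplectic pairing $\langle J X, z\rangle = p\cdot X_q - q\cdot X_p$ reproduces the stated formula. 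Finally I would verify directly, by differentiating under the integral sign and using the symmetry identity, that this $H$ indeed satisfies $\dot z = J\nabla H$, closing the loop.

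The main obstacle I expect is the careful bookkeeping in the adjoint computation: correctly handling the symplectic weight $J$ inside the integration by parts, tracking the sign produced when the derivative is transferred, and transposing the Jacobian through $J$. Getting these signs consistent is what makes the self-adjointness collapse cleanly to the symmetry of $J\,DX$; a sign error here would spuriously suggest either that every operator is self-adjoint or that none are. The Poincar\'e-lemma step and the verification that the radial-integral $H$ works are conceptually standard, so the delicate part is genuinely the algebra linking self-adjointness in the $J$-weighted $L^2$ inner product to the pointwise symmetry condition on the Jacobian.
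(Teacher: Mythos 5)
Your proposal is correct and follows essentially the same route the paper takes (the paper states this classical theorem without proof, citing Santilli, but proves its stochastic generalization in exactly this way): compute the Fr\'echet derivative, take the $J$-adjoint via integration by parts with variations vanishing at the endpoints, reduce self-adjointness to pointwise symmetry conditions on the Jacobian, and construct $H$ by radial (Poincar\'e-lemma) integration followed by direct verification under the integral sign. Two sign slips should be fixed in a full write-up: $J\,DX$ symmetric is equivalent to $DX^\mathsf{T}J=-J\,DX$, not $DX^\mathsf{T}J=J\,DX$; and $\langle JX,z\rangle=q\cdot X_p-p\cdot X_q$, so the theorem's formula is the radial potential of the one-form $\langle X,dz\rangle_J=\langle -JX,dz\rangle$ rather than of $\langle JX,dz\rangle$.
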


The conditions for the self-adjointness of the differential operator can be made \emph{explicitly}. They coincide with the \emph{integrability conditions} characterizing the exactness of the one-form associated to the vector field by duality (see \cite{santilli}, Theorem.2.7.3 p.88).

\begin{theorem}[Integrability conditions]
Let $X(q,p)^\mathsf{T}= (X_q(q,p) , X_p(q,p) )$ be a vector field. The differential operator $O^{a,b}_X$ given by (\ref{operatorO}) has a  self adjoin Fr\'echet derivative with respect to the symplectic scalar product if and only if 
\begin{equation}
\frac{\partial X_q}{\partial q} + \left(\frac{\partial X_p}{\partial p} \right)^\mathsf{T} = 0, \quad \frac{\partial X_q}{\partial p} \ \text{and} \ \frac{\partial X_p}{\partial q} \ \text{are symmetric} .
\end{equation}
\end{theorem}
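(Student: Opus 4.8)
The plan is to compute the Fr\'echet derivative of the operator $O^{a,b}_X$ explicitly and then impose the self-adjointness condition with respect to $\langle \cdot, \cdot \rangle_{L^2,J}$, reducing it to pointwise matrix conditions. First I would linearize: for a perturbation $h = (h_q, h_p)^\mathsf{T}$, the Fr\'echet derivative $DO^{a,b}_X(q,p)$ acts by
\begin{equation*}
DO^{a,b}_X(q,p)\cdot h = \dot{h} - \frac{\partial X}{\partial(q,p)}\, h,
\end{equation*}
where $\frac{\partial X}{\partial(q,p)}$ is the $2d\times 2d$ Jacobian matrix of the vector field $X=(X_q, X_p)^\mathsf{T}$, with blocks $\partial X_q/\partial q$, $\partial X_q/\partial p$, $\partial X_p/\partial q$, $\partial X_p/\partial p$. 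The derivative thus splits into the time-derivative operator $\frac{d}{dt}$ and the pointwise multiplication operator by the negative Jacobian.

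The main computational step is to identify the adjoint of each piece with respect to $\langle \cdot, \cdot\rangle_{L^2,J}$, which by definition weights the usual $L^2$ pairing through the symplectic matrix $J$. For the differential part $\frac{d}{dt}$, I would invoke integration by parts; the boundary terms are to be discarded by restricting to variations vanishing at the endpoints (the role played by $\mathcal{S}_0$ in the stochastic setting, or simply compactly supported $h$ here), so that $\frac{d}{dt}$ is formally anti-self-adjoint. The delicate point is how $J$ interacts with the derivative: since $\langle f, g\rangle_J = \langle f, Jg\rangle$ and $J$ is constant and antisymmetric with $J^\mathsf{T} = -J$, the sign bookkeeping must be done carefully. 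For the multiplication part, the adjoint of multiplication by a matrix $M$ under $\langle \cdot, \cdot\rangle_J$ is multiplication by $J^{-1}M^\mathsf{T}J = -J M^\mathsf{T} J$, since $J^{-1}=-J$.

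Setting $DO^{a,b}_X = (DO^{a,b}_X)^*_J$ then forces $-JM^\mathsf{T}J = M$ where $M = \partial X/\partial(q,p)$, i.e. $M^\mathsf{T} = -JMJ$ (using $J^2=-I$). Writing $M$ in its four $d\times d$ blocks and expanding $-JMJ$ via the explicit form of $J$, this single matrix identity unpacks into exactly three conditions: the diagonal blocks must satisfy $\frac{\partial X_q}{\partial q} + \left(\frac{\partial X_p}{\partial p}\right)^\mathsf{T} = 0$, and the off-diagonal blocks $\frac{\partial X_q}{\partial p}$ and $\frac{\partial X_p}{\partial q}$ must each be symmetric. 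I expect the main obstacle to be the careful sign and transpose accounting in translating the operator identity $-JM^\mathsf{T}J = M$ into the three stated block conditions, rather than any deep analytic difficulty; once the algebra of the symplectic conjugation $M \mapsto -JM^\mathsf{T}J$ is laid out blockwise, the equivalence is immediate.
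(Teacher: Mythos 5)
Your plan---split the Fr\'echet derivative into $\frac{d}{dt}$ minus multiplication by the Jacobian $M=\frac{\partial X}{\partial(q,p)}$, compute adjoints with respect to the symplectic $L^2$ pairing, and reduce self-adjointness to a block-matrix identity---is the right one, but your execution contains a sign error rooted in the adjoint convention, and it propagates to a final identity that does \emph{not} give the stated conditions. The paper defines the adjoint by $\langle A\cdot f, g\rangle_{L^2,J} = \langle A^*_J\cdot g, f\rangle_{L^2,J}$, with the two arguments \emph{swapped}; since the symplectic pairing is antisymmetric ($\langle f,g\rangle_J=-\langle g,f\rangle_J$), this differs by a sign from the standard convention $\langle Af,g\rangle=\langle f,A^*g\rangle$ that you use. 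In the swapped convention, integration by parts combines with $J^\mathsf{T}=-J$ so that $\frac{d}{dt}$ is \emph{self}-adjoint and the adjoint of multiplication by $M$ is $+JM^\mathsf{T}J$; self-adjointness of $\frac{d}{dt}-M$ then reads $M=JM^\mathsf{T}J$. In your convention, $\frac{d}{dt}$ is anti-self-adjoint, as you say, but then $\frac{d}{dt}-M$ can \emph{never} equal its adjoint $-\frac{d}{dt}+JM^\mathsf{T}J$: the first-order parts disagree for every $M$, so the equation you ``set'' is contradictory, and equating only the zeroth-order parts (which is what you implicitly do) is not a valid step. In your convention the notion that characterizes Hamiltonian vector fields is skew-adjointness $A^*=-A$, which again yields $M=JM^\mathsf{T}J$.

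The sign matters: your identity $M=-JM^\mathsf{T}J$ (equivalently $M^\mathsf{T}=-JMJ$) does not unpack into the three stated conditions. Writing $M=\begin{pmatrix} A & B\\ C & D\end{pmatrix}$ with $A=\frac{\partial X_q}{\partial q}$, $B=\frac{\partial X_q}{\partial p}$, $C=\frac{\partial X_p}{\partial q}$, $D=\frac{\partial X_p}{\partial p}$, a direct computation gives
\begin{equation*}
JM^\mathsf{T}J=\begin{pmatrix} -D^\mathsf{T} & B^\mathsf{T}\\ C^\mathsf{T} & -A^\mathsf{T}\end{pmatrix}.
\end{equation*}
Hence $M=JM^\mathsf{T}J$ gives $A+D^\mathsf{T}=0$, $B=B^\mathsf{T}$, $C=C^\mathsf{T}$, exactly the Helmholtz conditions, whereas your $M=-JM^\mathsf{T}J$ gives $A=D^\mathsf{T}$ together with $B=-B^\mathsf{T}$ and $C=-C^\mathsf{T}$: the wrong sign on the divergence-type condition and \emph{anti}symmetric off-diagonal blocks. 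Had you carried out the block expansion you describe as ``immediate,'' this discrepancy would have surfaced. (For reference: the paper does not prove this classical theorem---it cites Santilli---but its proof of the stochastic analogue, Proposition \ref{main2}, follows exactly your intended route in the swapped convention; there the terms $dU$, $dV$ reappear with unchanged sign in $DO^*_J$, which is the signature of the convention you missed.)
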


Of course, the first condition corresponds to the fact that Hamiltonian systems are \emph{divergence free}, i.e. we have $\mbox{div} X =0$. 

\subsection{Stochastic Helmholtz's conditions}

We derive the main result of the paper giving the characterization of stochastic differential equations which are stochastic Hamiltonian systems. 

\subsubsection{Stochastic symplectic scalar product}

Following the classical case, we introduce a stochastic analogue of the symplectic scalar product and the notion of self-adjointness for stochastic differential equations. \\

Consider two stochastic process $X_t, Y_t \in \mathcal{S}([a,b],\R^{2d})$ written as $dX_t = A(t,X_t)dt+B(t,X_t) \circ d B_t$ and $dY_t = C(t,Y_t)dt+D(t,Y_t) \circ d B_t $. We define

\begin{definition}[$L^2$-Stratonovich scalar product]
	We define the \\ $L^2$-Stratonovich scalar product of $X_t$ and $dY_t$ as
	\begin{equation}
	<X_t,dY_t>_{L^2,\text{Strato}}=\int_{a}^{b} X_t \circ dY_t
	\end{equation}
\end{definition}

The symplectic version is obtained as in the classical case.

\begin{definition}[$L^2$-Stratonovich symplectic scalar product]
	 We define the $L^2$-Stratonovich symplectic scalar product of $X_t$ and $dY_t$ such as
	\begin{equation}
	<X_t,dY_t>_{L^2,J,\text{Strato}} = <X_t,J \cdot dY_t>_{L^2,\text{Strato}}
	\end{equation}
\end{definition}

\subsubsection{Adjoin of a stochastic differential operator}

We consider the stochastic differential equations of the form
\begin{equation}
\label{equagens}
\begin{pmatrix} dQ \\ dP \end{pmatrix} = \begin{pmatrix} X_{Q,D}dt + X_{Q,S} \circ d B_t\\ X_{P,D}dt + X_{P,S} \circ d B_t \end{pmatrix}.
\end{equation}

The stochastic differential equations \eqref{equagens} is associated with the stochastic field $X(Q,P)=\{X_D(Q,P),X_S(Q,P)\}$. Its associated stochastic differential operator is written as
\begin{equation}
O_X(Q,P) = \begin{pmatrix} dQ - X_{Q,D}dt - X_{Q,S} \circ d B_t \\ dP - X_{P,D}dt - X_{P,S} \circ d B_t \end{pmatrix} \ . 
\label{operatorOs}
\end{equation}

A notion of {\it symplectic adjoin} for stochastic differential operators can be defined

\begin{definition}
Let $\fonctionsansdef{A}{\mathcal{S}([a,b],\R^{2d})}{\mathcal{S}^*([a,b],\R^{2d})}$.  We define the adjoin $\fonctionsansdef{A^*_J}{\mathcal{S}([a,b],\R^{2d})}{\mathcal{S}^*([a,b],\R^{2d})}$ of $A$ with respect to $<\cdot,\cdot>_{L^2,J,\text{Strato}}$ by
	\begin{equation}
	<A \cdot X_t, dY_t>_{L^2,J,Strato} = <A^{*}_J \cdot dY_t, X_t>_{L^2,J} \
	\end{equation}
\label{adjointstoc}
\end{definition}

As in the classical case, a main role will be played by self-adjoin stochastic differential operators.

\begin{definition}
A stochastic differential operator $A$ is said to be self-adjoin with respect to the symplectic scalar product $<\cdot,\cdot>_{L^2,J,\text{Strato}}$ if $A=A^{*}_J $.
\label{selfadjointsto}
\end{definition}

\subsubsection{Stochastic Helmholtz conditions}
 The main result of this Section is the stochastic analogue of the Hamiltonian Helmholtz conditions for stochastic differential equations. \\

\begin{proposition}
\label{main2}
Let $U,V \in \mathcal{S}_0([a,b],\R^{d})$. The Fr\'echet derivative $DO(Q,P)$ of (\ref{operatorOs}) is given by
\begin{align}
& DO(Q,P)(U,V) = \nonumber\\
&\begin{pmatrix} d U -\left[\frac{\partial X_{Q,D}}{\partial Q} \cdot U +\frac{\partial X_{Q,D} }{\partial P}\cdot V\right]dt - \left[\frac{\partial X_{Q,S}}{\partial Q} \cdot U +\frac{\partial X_{Q,S} }{\partial P}\cdot V\right]\circ d B_t \\ d V -\left[\frac{\partial X_{P,D}}{\partial Q} \cdot U +\frac{\partial X_{P,D} }{\partial P}\cdot V\right]dt - \left[\frac{\partial X_{P,S}}{\partial Q} \cdot U +\frac{\partial X_{P,S} }{\partial P}\cdot V\right]\circ d B_t  \end{pmatrix}
\end{align}
and his adjoin ${DO^*_J}(Q,P)$ with respect to the symplectic scalar product ${<\cdot,\cdot>_{L^2,J,\text{Strato}}}$ is given by
{\footnotesize\begin{align}
& {DO_J^*}(Q,P)(U,V) =\nonumber \\
&\begin{pmatrix} d U -\left[-\left(\frac{\partial X_{P,D}}{\partial P}\right)^\mathsf{T} \cdot U +\left(\frac{\partial X_{Q,D} }{\partial P}\right)^\mathsf{T}\cdot V\right]dt - \left[-\left(\frac{\partial X_{P,S}}{\partial P}\right)^\mathcal{T} \cdot U +\left(\frac{\partial X_{Q,S} }{\partial P}\right)^\mathcal{T}  \cdot V\right]\circ d B_t \\ d V -\left[\left(\frac{\partial X_{P,D}}{\partial Q}\right)^\mathsf{T} \cdot U -\left(\frac{\partial X_{Q,D} }{\partial Q}\right)^\mathsf{T}\cdot V\right]dt -  \left[\left(\frac{\partial X_{P,S}}{\partial Q}\right)^\mathcal{T} \cdot U -\left(\frac{\partial X_{Q,S} }{\partial Q}\right)^\mathcal{T}  \cdot V\right]\circ d B_t  \end{pmatrix}
\end{align}}
\end{proposition}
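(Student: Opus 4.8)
The plan is to treat the two assertions of Proposition~\ref{main2} separately: first the computation of the Fr\'echet derivative $DO(Q,P)$, which is a direct linearization, and then the identification of its symplectic adjoint $DO^*_J(Q,P)$, which is where the genuine work lies.

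For the Fr\'echet derivative I would perturb the point $(Q,P)$ by an increment $(U,V)\in\mathcal{S}_0([a,b],\R^d)\times\mathcal{S}_0([a,b],\R^d)$ and expand each coefficient of the operator \eqref{operatorOs} to first order. Writing $X_{\bullet,\bullet}(Q+U,P+V)=X_{\bullet,\bullet}(Q,P)+\frac{\partial X_{\bullet,\bullet}}{\partial Q}\cdot U+\frac{\partial X_{\bullet,\bullet}}{\partial P}\cdot V+o(\|(U,V)\|)$ for each of the four coefficient fields $X_{Q,D},X_{P,D},X_{Q,S},X_{P,S}$, and using that the differentials $dQ,dP$ are linear (so their increments are $dU,dV$), one reads off the linear part directly. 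Collecting the drift ($dt$) and diffusion ($\circ dB_t$) contributions componentwise gives exactly the stated expression for $DO(Q,P)(U,V)$. This step is routine and uses only the smoothness of the stochastic field.

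For the adjoint I would start from the defining relation of Definition~\ref{adjointstoc}: fixing $(U,V)\in\mathcal{S}_0\times\mathcal{S}_0$ and letting $(U',V')\in\mathcal{S}_0\times\mathcal{S}_0$ be arbitrary, I would compute $\langle DO(Q,P)(U',V'),\,(U,V)\rangle_{L^2,J,\text{Strato}}$ and recast it into the form $\langle DO^*_J(Q,P)(U,V),\,(U',V')\rangle_{L^2,J,\text{Strato}}$, from which $DO^*_J(Q,P)(U,V)$ can be read off. Expanding the symplectic pairing, the matrix $J=\left(\begin{smallmatrix}0 & I_d\\-I_d & 0\end{smallmatrix}\right)$ swaps the two blocks and inserts a sign, so the pairing splits into a drift part and a diffusion part, each an integral of the form $\int_a^b(\cdots)\circ(\cdots)$. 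The decisive tool is the Stratonovich integration by parts formula stated above: applied to the terms carrying $dU'$ and $dV'$, it transfers the exterior differential onto the other factor. Because $U,V,U',V'$ all lie in $\mathcal{S}_0$, they vanish at the endpoints $a$ and $b$, so every boundary term $X(b)\cdot Y(b)-X(a)\cdot Y(a)$ drops out. For the remaining algebraic terms, moving a coefficient matrix $M$ from one slot of the scalar product to the other produces its transpose through $\langle M\cdot W_1,W_2\rangle=\langle W_1,M^{\mathsf T}\cdot W_2\rangle$. Collecting the drift and diffusion contributions block by block then yields the claimed formula for $DO^*_J(Q,P)(U,V)$.

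The main obstacle is purely bookkeeping, but it is delicate: one must track simultaneously the sign change produced by the integration by parts, the block swap and sign inserted by the symplectic matrix $J$, and the transpositions of the Jacobian matrices. It is precisely their interplay that moves a $P$-derivative into the $Q$-row (and conversely) and fixes the sign pattern $-(\partial_P X_{P,\bullet})^{\mathsf T}$, $+(\partial_P X_{Q,\bullet})^{\mathsf T}$, $+(\partial_Q X_{P,\bullet})^{\mathsf T}$, $-(\partial_Q X_{Q,\bullet})^{\mathsf T}$, and also explains why the leading terms remain $dU$ and $dV$ rather than $-dU,-dV$. A key structural point worth emphasising is that the Stratonovich integration by parts formula has exactly the same shape as its deterministic counterpart, with no It\^o correction term; consequently the diffusion ($\circ dB_t$) terms are manipulated verbatim like the drift ($dt$) terms, and the whole computation parallels the classical Hamiltonian Helmholtz derivation line for line.
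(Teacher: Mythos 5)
Your proposal is correct and follows essentially the same route as the paper: the Fr\'echet derivative is obtained by first-order linearization of the coefficient fields, and the adjoint is identified by expanding the symplectic pairing, applying the Stratonovich integration by parts formula (with boundary terms vanishing thanks to $\mathcal{S}_0$), and moving Jacobian matrices across the scalar product as transposes. The only cosmetic difference is that the paper pairs against general test processes $(F,G)\in\mathcal{S}([a,b],\R^d)$ rather than elements of $\mathcal{S}_0$, which changes nothing since only the perturbations being differentiated need to vanish at the endpoints.
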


\begin{remark}
The notation ${}^\mathcal{T}$ correspond to a transposition as follows: \\

As $X_{Q,S}$ and $X_{P,S}$ are in $\R^{d\times n}$ then each partial derivative with respect to $Q$ or $P$ are in $\R^{d\times n \times d}$. For example, $\frac{\partial X_{Q,S}}{\partial Q}=\left( \frac{\partial \left(X_{Q,S}\right)_{i,j}}{\partial Q_k}\right)_{1\leq i\leq d,\ 1\leq j\leq n, \ 1\leq k \leq d}$ and then $\left(\frac{\partial X_{Q,S}}{\partial Q}\right)^\mathcal{T}=\left( \frac{\partial \left(X_{Q,S}\right)_{k,j}}{\partial Q_i}\right)_{1\leq i\leq d,\ 1\leq j\leq n, \ 1\leq k \leq d}$ with respect to the same indices.
\end{remark}

\begin{proof}
Let $U,V \in \mathcal{S}_0([a,b],\R^{d})$ and $F,G \in \mathcal{S}([a,b],\R^{d})$. The Fr\'echet derivative $DO(Q,P)$ follows from simple computations and is given by 
{\footnotesize\begin{align}
&\langle DO(Q,P)(U,V),(F,G) \rangle_{L^2,J,\text{Strato}}= \nonumber \\
&\int_{a}^{b}\bigg( d U \cdot G -\left[\frac{\partial X_{Q,D}}{\partial Q} \cdot U +\frac{\partial X_{Q,D} }{\partial P}\cdot V\right]\cdot G \ dt - \left\{\left[\frac{\partial X_{Q,S}}{\partial Q} \cdot U +\frac{\partial X_{Q,S} }{\partial P}\cdot V\right]\circ d B_t\right\}\cdot G \nonumber \\
 &- d V \cdot F +\left[\frac{\partial X_{P,D}}{\partial Q} \cdot U +\frac{\partial X_{P,D} }{\partial P}\cdot V\right]\cdot F \ dt + \left\{\left[\frac{\partial X_{P,S}}{\partial Q} \cdot U +\frac{\partial X_{P,S} }{\partial P}\cdot V\right]\circ d B_t\right\}\cdot F \bigg)  
\end{align}}
Using the Stratonovich integration by parts formula, we obtain 
{\footnotesize
\begin{align*}
&\langle DO(Q,P)(U,V),(F,G) \rangle_{L^2,J,\text{Strato}}= \nonumber \\
&\int_{a}^{b}\bigg( - U \cdot dG -\left[\left( \left(\frac{\partial X_{Q,D}}{\partial Q}\right)^\mathsf{T} \cdot G\right)\cdot U +\left(\left(\frac{\partial X_{Q,D} }{\partial P}\right)^\mathsf{T}\cdot G\right)\cdot V\right] \ dt \nonumber\\
&- \left\{\left[\left(\frac{\partial X_{Q,S}}{\partial Q}\right)^\mathcal{T} \cdot G \right]\circ d B_t\right\}\cdot U - \left\{\left[\left(\frac{\partial X_{Q,S} }{\partial P}\right)\cdot G\right]\circ d B_t\right\}\cdot V\nonumber \\
&-V \cdot dF +\left[\left( \left(\frac{\partial X_{P,D}}{\partial Q}\right)^\mathsf{T} \cdot F\right)\cdot U +\left(\left(\frac{\partial X_{P,D} }{\partial P}\right)^\mathsf{T}\cdot F\right)\cdot V\right] \ dt \nonumber\\
&+ \left\{\left[\left(\frac{\partial X_{P,S}}{\partial Q}\right)^\mathcal{T} \cdot F \right]\circ d B_t\right\}\cdot U - \left\{\left[\left(\frac{\partial X_{P,S} }{\partial P}\right)\cdot F\right]\circ d B_t\right\}\cdot V \bigg)  
\end{align*}}
As a consequence, the symplectic stochastic adjoin of $DO(Q,P)$ is given by
{\footnotesize\begin{align*}
& {DO_J^*}(Q,P)(F,G) = \\
&\begin{pmatrix} d F -\left[-\left(\frac{\partial X_{P,D}}{\partial P}\right)^\mathsf{T} \cdot F +\left(\frac{\partial X_{Q,D} }{\partial P}\right)^\mathsf{T}\cdot G\right]dt - \left[-\left(\frac{\partial X_{P,S}}{\partial P}\right)^\mathcal{T} \cdot F +\left(\frac{\partial X_{Q,S} }{\partial P}\right)^\mathcal{T}  \cdot G\right]\circ d B_t \\ d G -\left[\left(\frac{\partial X_{P,D}}{\partial Q}\right)^\mathsf{T} \cdot F -\left(\frac{\partial X_{Q,D} }{\partial Q}\right)^\mathsf{T}\cdot G\right]dt -  \left[\left(\frac{\partial X_{P,S}}{\partial Q}\right)^\mathcal{T} \cdot F -\left(\frac{\partial X_{Q,S} }{\partial Q}\right)^\mathcal{T}  \cdot G\right]\circ d B_t  \end{pmatrix}
\end{align*}}
which concludes the proof.
\end{proof}

It follows directly, an explicit characterization of stochastic vector fields satisfying the stochastic Hamiltonian Helmholtz conditions. By coherence with the classical case, we call them \emph{stochastic integrability conditions}.

\begin{proposition}[Stochastic integrability conditions]
The operator $O_X$ defined by (\ref{operatorOs}) has a self-adjoin Fr\'echet derivative at $(Q,P)\in \mathcal{S}([a,b],\R)\times \mathcal{S}([a,b],\R)$ if and only if the conditions
\begin{align}
&\frac{\partial X_{Q,D}}{\partial Q} + \left(\frac{\partial X_{P,D}}{\partial P} \right)^\mathsf{T} = 0  , \\
&\frac{\partial X_{Q,D}}{\partial P}= \left(\frac{\partial X_{Q,D}}{\partial P}\right)^\mathsf{T} \ \text{and} \quad \frac{\partial X_{P,D}}{\partial Q}= \left( \frac{\partial X_{P,D}}{\partial Q}\right)^\mathsf{T}, \\
&\frac{\partial X_{Q,S}}{\partial Q} + \left(\frac{\partial X_{P,S}}{\partial P} \right)^\mathcal{T} = 0  , \\
&\frac{\partial X_{Q,S}}{\partial P}= \left(\frac{\partial X_{Q,S}}{\partial P}\right)^\mathcal{T} \ \text{and} \quad \frac{\partial X_{P,S}}{\partial Q} = \left( \frac{\partial X_{P,S}}{\partial Q}\right)^\mathcal{T},
\end{align}
are satisfied over $[a,b]$.
\end{proposition}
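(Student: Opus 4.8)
The plan is to read off the four conditions directly by comparing the two explicit matrix expressions furnished by Proposition~\ref{main2}. By Definition~\ref{selfadjointsto}, the operator $O_X$ of~\eqref{operatorOs} has a self-adjoint Fréchet derivative exactly when $DO(Q,P) = DO^*_J(Q,P)$ as operators acting on pairs $(U,V)$. So I would begin by forming the difference $DO(Q,P)(U,V) - DO^*_J(Q,P)(U,V)$ componentwise and demanding that it vanish for all admissible test processes $(U,V)\in\mathcal{S}_0([a,b],\R^d)\times\mathcal{S}_0([a,b],\R^d)$.

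The key step is a decomposition-and-matching argument. Each component of the difference is again a Stratonovich process carrying a deterministic $dt$-part and a purely stochastic $\circ\,dB_t$-part; since the $dU,dV$ terms cancel identically, what remains in each component is a linear combination of $U$ and $V$ weighted by the relevant Jacobian blocks. I would invoke the uniqueness of the drift/diffusion decomposition of a Stratonovich process, so that the $dt$-equation and the $\circ\,dB_t$-equation may be imposed separately, and then use the freedom to choose $U$ and $V$ independently to match the coefficient of $U$ and the coefficient of $V$ on their own. This produces eight matrix identities. From the first row, the $dt$-part gives $\frac{\partial X_{Q,D}}{\partial Q} + (\frac{\partial X_{P,D}}{\partial P})^\mathsf{T}=0$ (coefficient of $U$) and the symmetry of $\frac{\partial X_{Q,D}}{\partial P}$ (coefficient of $V$), while the $\circ\,dB_t$-part gives $\frac{\partial X_{Q,S}}{\partial Q} + (\frac{\partial X_{P,S}}{\partial P})^\mathcal{T}=0$ and the symmetry of $\frac{\partial X_{Q,S}}{\partial P}$; from the second row one obtains the symmetry of $\frac{\partial X_{P,D}}{\partial Q}$ and of $\frac{\partial X_{P,S}}{\partial Q}$.

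After collecting these, I would point out the redundancy. The $V$-coefficient identity in the second row, $\frac{\partial X_{P,D}}{\partial P} = -(\frac{\partial X_{Q,D}}{\partial Q})^\mathsf{T}$, is the transpose of the $U$-coefficient identity in the first row, hence encodes the same divergence-free relation $\frac{\partial X_{Q,D}}{\partial Q} + (\frac{\partial X_{P,D}}{\partial P})^\mathsf{T}=0$; the analogous repetition occurs in the stochastic part. Thus the eight identities collapse to the four stated conditions, the deterministic ones being the classical integrability conditions and the stochastic ones their verbatim analogue with the matrix transpose ${}^\mathsf{T}$ replaced by the tensor transposition ${}^\mathcal{T}$ of the Remark.

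I do not expect a genuine obstacle here, since all the computational content already lives in Proposition~\ref{main2}. The one point requiring care is the legitimacy of separating the $dt$- and $\circ\,dB_t$-contributions and then equating the coefficients of $U$ and $V$: this is precisely where the uniqueness of the Stratonovich decomposition and the independence of the test processes $U,V$ must be used explicitly rather than taken for granted. The only other risk is purely bookkeeping, namely keeping track of which transposition symbol (${}^\mathsf{T}$ on the $d\times d$ deterministic blocks versus ${}^\mathcal{T}$ on the order-three stochastic blocks in $\R^{d\times n\times d}$) attaches to each identity.
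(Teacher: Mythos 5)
Your proposal is correct and matches the paper's treatment: the paper gives no separate proof, stating only that the proposition ``follows directly'' from Proposition~\ref{main2}, i.e.\ precisely by equating $DO(Q,P)$ with $DO^*_J(Q,P)$, separating the $dt$ and $\circ\, dB_t$ parts, and matching the coefficients of $U$ and $V$ as you do. Your explicit attention to the uniqueness of the Stratonovich decomposition and the redundancy of the second-row identities (which are transposes of the first-row ones) simply makes rigorous what the paper leaves implicit.
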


\begin{remark}
Theses conditions are the same that the authors in \cite{milstein} obtained. It means, as in the classical case, there is coherence between the conservation of the symplectic form associated with the stochastic field $X(Q,P)$ and the self-adjointness of the stochastic differential operator $O_X$.
\end{remark}

\begin{theorem}[Stochastic Hamiltonian Helmholtz theorem]
\label{main1}
Let $X(Q,P)=\{X_D(Q,P),X_S(Q,P)\}$ be a stochastic field. The stochastic differential equations (\ref{equagens}) is a stochastic Hamiltonian equation if and only if the operator $O_X$ defined by (\ref{operatorOs}) has a self-adjoin Fr\'echet derivative with respect to the symplectic scalar product $<\cdot,\cdot>_{L^2,J,\text{Strato}}$. \\

\noindent Moreover, in this case, the Hamiltonian $\mathbf{H}=\{H_D, H_S\}$ is given by 
\begin{align}
\label{ham_form}
H_D(Q,P)&=\int_{0}^{1}\left[P \cdot X_{Q,D}(\lambda Q, \lambda P) - Q\cdot X_{P,D}(\lambda Q, \lambda P) \right]d\lambda, \\
H_S(Q,P)&=\int_{0}^{1}\left[P \cdot X_{Q,S}(\lambda Q, \lambda P) - Q\cdot X_{P,S}(\lambda Q, \lambda P) \right]d\lambda,
\end{align}
\end{theorem}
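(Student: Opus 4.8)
The statement is a biconditional together with an explicit reconstruction formula, so I would organize the proof around the equivalence between the self-adjointness of $O_X$ and the stochastic integrability conditions already established in the preceding Proposition. It then suffices to prove that the system (\ref{equagens}) is Hamiltonian if and only if those integrability conditions hold, and that in the affirmative case the Hamiltonian is the one displayed. Throughout I would use the dictionary obtained by comparing (\ref{equagens}) with the definition of a stochastic Hamiltonian system, namely $X_{Q,D}=\partial H_D/\partial P$, $X_{P,D}=-\partial H_D/\partial Q$, $X_{Q,S}=\partial H_S/\partial P$ and $X_{P,S}=-\partial H_S/\partial Q$, where the stochastic relations are read componentwise for each of the $n$ scalar components of $H_S$.

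For the forward implication, I would assume the system is Hamiltonian and substitute this dictionary into the four integrability conditions. Each condition then reduces to an identity between mixed second partial derivatives of $H_D$ (respectively of a component of $H_S$): the divergence-free condition becomes $\partial^2 H_D/\partial Q\,\partial P - (\partial^2 H_D/\partial P\,\partial Q)^{\mathsf{T}}=0$, while the two symmetry conditions become the symmetry of $\partial^2 H_D/\partial P^2$ and of $\partial^2 H_D/\partial Q^2$. All of these hold by Schwarz's theorem on the equality of mixed partials (assuming the requisite $C^2$ regularity of $H_D$ and $H_S$), and the same computation handles the stochastic conditions componentwise, where the transpose $^{\mathcal{T}}$ of the Remark is exactly the index swap needed.

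The converse is the substantive part, and I expect it to be the main obstacle. Assuming the integrability conditions, I would define $H_D$ and $H_S$ by the displayed homotopy (Poincar\'e-type) formulas and verify directly that they regenerate the vector field. Concentrating on the deterministic part and a single coordinate, differentiating under the integral sign and applying the chain rule gives
\begin{equation*}
\frac{\partial H_D}{\partial P_i}=\int_0^1\Big[(X_{Q,D})_i(\lambda Q,\lambda P)+\lambda\sum_j P_j\frac{\partial (X_{Q,D})_j}{\partial P_i}-\lambda\sum_j Q_j\frac{\partial (X_{P,D})_j}{\partial P_i}\Big]\,d\lambda,
\end{equation*}
where the partial derivatives are evaluated at $(\lambda Q,\lambda P)$. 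The key step is to rewrite the two sums using the integrability conditions: the symmetry of $\partial X_{Q,D}/\partial P$ turns $\partial (X_{Q,D})_j/\partial P_i$ into $\partial (X_{Q,D})_i/\partial P_j$, while the divergence-free condition turns $-\partial (X_{P,D})_j/\partial P_i$ into $\partial (X_{Q,D})_i/\partial Q_j$. The bracketed integrand then becomes exactly $\frac{d}{d\lambda}\big[\lambda (X_{Q,D})_i(\lambda Q,\lambda P)\big]$, so the integral telescopes to $(X_{Q,D})_i(Q,P)$, giving $\partial H_D/\partial P=X_{Q,D}$. The analogous computation for $\partial H_D/\partial Q$ recovers $-X_{P,D}$, and repeating the argument componentwise for $H_S$ --- now invoking the $^{\mathcal{T}}$-symmetry conditions --- yields $\partial H_S/\partial P=X_{Q,S}$ and $-\partial H_S/\partial Q=X_{P,S}$. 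Hence (\ref{equagens}) is precisely the stochastic Hamiltonian system generated by $\mathbf{H}=\{H_D,H_S\}$. Combining both implications with the Proposition identifying self-adjointness and the integrability conditions completes the proof; the only delicate bookkeeping is keeping the tensor transpose $^{\mathcal{T}}$ consistent across the $d\times n$ stochastic coefficients.
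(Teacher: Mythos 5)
Your proposal is correct and follows essentially the same route as the paper: reduce self-adjointness to the stochastic integrability conditions via the preceding Proposition, get the forward implication from Schwarz's lemma, and for the converse verify that the homotopy formulas regenerate the field by differentiating under the integral, applying the integrability conditions, and recognizing the integrand as $\frac{d}{d\lambda}\left[\lambda X(\lambda Q,\lambda P)\right]$. The only cosmetic difference is that the paper carries out the detailed computation for $\partial H_D/\partial Q$ and leaves $\partial H_D/\partial P$ ``in the same way,'' whereas you do the reverse.
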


\begin{proof}
If $X$ is a stochastic Hamiltonian field then there exist a function $H_D \in C^2(\R^d \times \R^d , \R )$ and $H_S \in C^2(\R^d \times \R^d , \R^n )$ such that $X_{Q,D}=\frac{\partial H_D}{\partial P}, X_{P,D}=-\frac{\partial H_D}{\partial Q}$ and$X_{Q,S}=\frac{\partial H_S}{\partial P}, X_{P,S}=-\frac{\partial H_S}{\partial Q}$ . The stochastic integrability conditions are satisfied by the Schwarz lemma. \\

Reciprocally, we suppose that $X$ satisfies the stochastic integrability conditions. First, we compute $\frac{\partial H_D}{\partial Q}$. We denote $(\ast)=(\lambda Q, \lambda P)$ and we have
\begin{equation}
\frac{\partial H_D}{\partial Q}(Q,P) = \int_{0}^{1} \bigg( \lambda P \cdot \left(\frac{\partial X_{Q,D}(\ast)}{\partial Q}\right)^\mathsf{T} - X_{Q,D}-\lambda Q \cdot \left(\frac{\partial X_{P,D}(\ast)}{\partial Q}\right)^\mathsf{T}\bigg)d\lambda \nonumber
\end{equation}
Using the stochastic integrability conditions for the deterministic part, we obtain
\begin{equation}
\frac{\partial H_D}{\partial Q}(Q,P) = \int_{0}^{1} \bigg( -\lambda P \cdot \frac{\partial X_{P,D}(\ast)}{\partial P}- X_{Q,D}-\lambda Q \cdot \frac{\partial X_{P,D}(\ast)}{\partial Q}\bigg)d\lambda \nonumber
\end{equation}
Remarking that
\begin{equation*}
\frac{\partial X_{P,D}(\ast)}{\partial \lambda} = \frac{\partial X_{P,D}(\ast)}{\partial P} \cdot P + \frac{\partial X_{P,D}(\ast)}{\partial Q}\cdot Q,
\end{equation*}
we deduce
\begin{equation*}
\frac{\partial H_D}{\partial Q}(Q,P) = \int_{0}^{1} - \frac{\partial}{\partial \lambda}\left( \lambda X_{P,D}(\ast)\right)d\lambda \nonumber
\end{equation*}
and finally, integrating with respect to $\lambda$
\begin{equation*}
\frac{\partial H_D}{\partial Q}(Q,P) = - X_{P,D}(Q,P)
\end{equation*}
In the same way we obtain
\begin{equation*}
\frac{\partial H_D}{\partial P}(Q,P) = X_{Q,D}(Q,P).
\end{equation*}
Second, we compute $\frac{\partial H_S}{\partial Q}$. We have
\begin{equation*}
\frac{\partial H_S}{\partial Q}(Q,P) = \int_{0}^{1} \bigg( \lambda P \cdot \left(\frac{\partial X_{Q,S}(\ast)}{\partial Q}\right)^\mathcal{T} - X_{Q,S}-\lambda Q \cdot \left(\frac{\partial X_{P,S}(\ast)}{\partial Q}\right)^\mathcal{T}\bigg)d\lambda \nonumber
\end{equation*}
Using the stochastic integrability conditions for the purely stochastic part, we obtain using the same trick as previous
\begin{equation}
\frac{\partial H_S}{\partial Q}(Q,P) = - X_{P,S}(Q,P)
\end{equation}
and
\begin{equation}
\frac{\partial H_S}{\partial P}(Q,P) = X_{Q,S}(Q,P)
\end{equation}
This concludes the proof.
\end{proof}

\begin{remark}
The Stratonovich calculus seems to be more appropriate for such computations and also for its result about the variational structure developed by J-M Bismut in \cite{bismut} contrary to the It\^o calculus. However, one can define a notion of It\^o Hamiltonian systems following the self-adjointness characterization. Indeed, the definition of Stratonovich symplectic scalar product can be also defined with the It\^o integral. But one will have a counterpart in the integration by parts in the proof of Proposition \ref{main2}. It leads to an additional set of conditions on the deterministic part of the stochastic field due to the extra term appearing with the It\^o formula which can be very restrictive. 

Also, in the Stratonovich calculus, the Hamiltonian defined is a \emph{first integral} over the solutions of the stochastic differential equations considered, i.e. a constant process, as in the classical case. Whereas the definition obtained with the It\^o calculs as explained previously leads to a second extra set of conditions in order to have the Hamiltonian defined as a first integral. It leads to a real need of interpretation of Hamiltonian systems for stochastic calculus.
\end{remark}

\section{Conclusion and prospects}

We proved a result on Stratonovich stochastic differential equations which allows us to find the existence of a Hamiltonian structure associated and in the affirmative case to give the Hamiltonian. Our result cover the classical case when there is no purely stochastic counterpart.

An important extension of this result concern the stochastic time-scale calculus and more precisely the stochastic calculus on time scales developed in \cite{sanyal} and \cite{bohnersto}. First, the work is to define a notion of Stratonovich calculus on time-scale as it has been done for It\^o calculus in \cite{sanyal} and \cite{bohnersto}. Second, the work is to define a natural notion of stochastic Hamiltonian on time scales and then to give the stochastic time-scale version of Theorem \ref{main1}. This extension is a work in progress and will be the subject of a future paper.


\end{document}